\theoremstyle{definition}
\newtheorem{theorem}{Theorem}[section]
\newtheorem{theoremx}{Theorem}
\numberwithin{equation}{section}
\newtheorem{question}[theorem]{Question}
\newtheorem{corollary}[theorem]{Corollary}
\newtheorem{lemma}[theorem]{Lemma}
\newtheorem{notation}[theorem]{Notation}
\newtheorem*{claim*}{Claim}
\theoremstyle{definition}
\newtheorem{remark}[theorem]{Remark}
\newcommand{\e}{\operatorname{e}}
\newtheoremstyle{TheoremNum}
        {8pt}{8pt}              
        {\upshape}                      
        {}                              
        {\bfseries}                     
        {.}                             
        {.5em}                             
        {\thmname{#1}\thmnote{ \bfseries #3}}
  \theoremstyle{TheoremNum}
\newcommand{\m}{\mathfrak{m}}
\newcommand{\ZZ}{\mathbb{Z}}
\newcommand{\reg}{\operatorname{reg}}
\newcommand{\Ht}{\operatorname{ht}}
\newcommand{\HF}{\operatorname{HF}}
\newcommand{\ds}{\displaystyle}
\newcommand{\ov}[1]{\overline{#1}}
\renewcommand{\leq}{\leqslant}
\renewcommand{\geq}{\geqslant}
\newcommand{\LPP}{\operatorname{LPP}}
\newcommand{\sat}{\operatorname{{sat}}}
\newcommand{\kk}{\Bbbk}
\DeclareMathOperator{\Tor}{Tor}
\title[Linearly presented modules and bounds on the regularity of ideals]{Linearly presented modules and bounds on the Castelnuovo-Mumford regularity of ideals}
\author{Giulio Caviglia}
\address{Department of Mathematics, Purdue University, 150 N. University Street, West Lafayette, IN 47907-2067, USA}
\email{gcavigli@purdue.edu}
\author{Alessandro De Stefani}
\address{Dipartimento di Matematica, Universit{\`a} di Genova, Via Dodecaneso 35, 16146 Genova, Italy}
\email{destefani@dima.unige.it}
\thanks{The work of the first named author was partially supported by a grant from the Simons Foundation (41000748, G.C.)}
\subjclass[2020]{Primary 13D02; Secondary 13A02, 13A15}
\keywords{Linearly presented syzygies, Castelnuovo-Mumford regularity, EGH conjecture}
\begin{document}

\maketitle

\begin{abstract}
We estimate the Castelnuovo-Mumford regularity of ideals in a polynomial ring over a field by studying the regularity of certain modules generated in degree zero and with linear relations. In dimension one, this process gives a new type of upper bounds. By means of recursive techniques this also produces new upper bounds for ideals in any dimension. 
\end{abstract}

\section{Introduction}
Let $S=\kk[x_1,\ldots,x_n]$ be standard graded polynomial ring over a field $\kk$, and $I$ be a homogeneous ideal generated in degree at most $D$. The Castelnuovo-Mumford regularity is a measure of the complexity of the ideal $I$, and has been extensively studied in the literature.

Upper bounds for $\reg(I)$ in terms of $D$ and $n$ have been proved by Galligo \cite{Galligo1, Galligo2} and Giusti \cite{Giusti} over fields of characteristic zero, then extended to all characteristics by Bayer and Mumford \cite{BM} and by the first named author and Sbarra \cite{CS}. They prove that $\reg(I) \leq (2D)^{2^{n-2}}$.

Similar type of results for finitely generated graded modules, or for certain classes of modules or ideals, have been obtained by several authors (for instance, see \cite{CFN,BG,HT}).

A key result of this article is a new type of upper bound for modules of dimension one. This result is inspired by an analogous one obtained by Bruns, Conca and R{\"o}mer in the zero-dimensional case \cite[Remark 3.10 (b)]{BCR}.

For higher dimension, there is a well-known argument that allows to reduce to a lower-dimensional case by expressing the regularity of an ideal in terms of that of its hyperplane sections (see \cite{CS}, and Section \ref{Section regularity}). We optimize this reduction step by introducing correction factors, therefore improving known upper bounds for the regularity of ideals in any dimension. The following is the main result of this article.

\begin{theoremx} (Theorems \ref{THM bound regularity dim at most one} and \ref{THM bound regularity dim at least two}) \label{THMX A} Let $\kk$ be an infinite field, $S=\kk[x_1,\ldots,x_n]$ with the standard grading, and $I$ be a homogeneous ideal generated in degree at most $D$. Let $d=\dim(S/I)$, $h=\Ht(I)$, and $y_1,\ldots,y_d$ be a linear system of parameters in $S/I$. Let $\e(S/I)$ be the multiplicity of $S/I$, and $c$ be the value of Hilbert function of $S/(I+(y_1,\ldots,y_{d-1}))$ in degree $D$ (if $d \leq 1$, by this we simply mean the value of the Hilbert function of $S/I$ in degree $D$). Then 
\begin{align*}
\reg(S/I) \leq \begin{cases} D+c-1 & \text{ if } d \leq 1 \\ 
\left[(D+c-1)\left(D^h-\e(S/I) +1\right)\right]^{2^{d-2}} & \text{ if } d \geq 2.
\end{cases}
\end{align*}
\end{theoremx}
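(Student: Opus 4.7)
The proof proceeds by induction on $d = \dim(S/I)$. The base case $d \leq 1$ rests on a bound for the regularity of linearly presented modules generated in degree zero; the inductive step $d \geq 2$ is a hyperplane-section reduction of Caviglia-Sbarra type, sharpened so as to introduce the correction factor $D^h - \e(S/I) + 1$ exactly once, through an estimate on the finite-length piece of $S/I$.

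\emph{Base case} ($d \leq 1$). The plan is to associate to $I$ a finitely generated graded $S$-module $M$, generated in degree zero and linearly presented, satisfying $\reg(S/I) \leq D + \reg(M)$ and $\dim_\kk M_0 \leq c$. A natural candidate is obtained from the truncation $I_{\geq D}$ after shifting to degree zero, modified slightly if necessary so that its first syzygies live in degree one. The earlier results of the paper on modules of this form (in the spirit of \cite[Remark 3.10 (b)]{BCR}) give $\reg(M) \leq \dim_\kk M_0 - 1$, from which $\reg(S/I) \leq D + c - 1$ follows at once.

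\emph{Inductive step} ($d \geq 2$). Since $\kk$ is infinite, pick a generic linear form $\ell$, chosen among $y_1,\ldots,y_d$ (after reordering if necessary) so that the numerical invariant $c$ is unchanged upon passing to $\bar{S}/\bar{I}$, where $\bar{S} = S/(\ell)$ and $\bar{I} = (I+(\ell))/(\ell)$; require moreover that $\ell$ be filter-regular on $S/I$ and avoid every minimal prime of $I$ of maximal dimension. One verifies $\dim(\bar{S}/\bar{I}) = d-1$, $\Ht(\bar{I}) = h$, and $\e(\bar{S}/\bar{I}) = \e(S/I)$, so both $A := D + c - 1$ and $B := D^h - \e(S/I) + 1$ descend unchanged to the hyperplane section, and the induction hypothesis bounds $\reg(\bar{S}/\bar{I})$.

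To conclude, one applies a reduction inequality expressing $\reg(S/I)$ in terms of $\reg(\bar{S}/\bar{I})$ and the finite-length piece $H^0_\m(S/I) \cong (I :_S \m^\infty)/I$. The crucial geometric input is the estimate $\dim_\kk H^0_\m(S/I) \leq D^h - \e(S/I)$, coming from the existence inside $I$ of a generic complete intersection of $h$ forms of degree $\leq D$ (whose multiplicity is exactly $D^h$) together with additivity of multiplicity over the unmixed decomposition; this is what injects the factor $B$ into the final bound. The exponent $2^{d-2}$ arises from iterating the squaring step at each dimension drop in the Caviglia-Sbarra recursion. The main obstacle I anticipate is calibrating the reduction so that $B$ enters the bound sharply and only once, rather than being dissipated into every level of the recursion, and verifying that $c$ can indeed be preserved under a single well-chosen hyperplane section.
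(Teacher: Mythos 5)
Your base case is essentially the paper's argument, but two details need repair. The module you want is $(S/I)_{\geq D}$ shifted to degree zero, not $I_{\geq D}$: the latter has $\dim_\kk S_D - c$ generators and need not be linearly presented, whereas $(S/I)_{\geq D}(D)$ is generated in degree zero by $c$ elements and its relations live in degree at most one (this is read off from the exact sequences $\Tor_2^S(\kk,S/\m^D)_j \to \Tor_1^S(\kk,F/U(-D))_j \to \Tor_1^S(\kk,S/I)_j$). Also, the bound $\reg(M)\le \dim_\kk M_0-1$ for such a module is easy only in dimension zero (that is the content of \cite[Remark 3.10 (b)]{BCR}); in dimension one it is \cite[Theorem 2.1 (ii)]{CFN} with $l=1$, and that theorem is the real content of the base case, so it should be invoked explicitly rather than folded into ``earlier results of the paper.''

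The inductive step has a genuine gap: the ``crucial geometric input'' $\dim_\kk H^0_\m(S/I)\le D^h-\e(S/I)$ is false. Take $S=\kk[x,y,z,w]$ and $I=(x,y)\m^{D-1}$, which is generated in degree $D$ with $d=h=2$ and $\e(S/I)=1$; then $H^0_\m(S/I)=(x,y)/(x,y)\m^{D-1}=\bigoplus_{j=1}^{D-1}(x,y)_j$ has length $\sum_{j=1}^{D-1}\bigl(\binom{j+3}{3}-(j+1)\bigr)\sim D^4/24$, already exceeding $D^2-1$ at $D=3$. The quantity that genuinely satisfies a $D^h-\e(R)$ bound is the length of $\bigl((I+(y_1,\ldots,y_{d-1}))^{\sat}+(y_d)\bigr)/(I+(y_1,\ldots,y_d))$, a quotient of the \emph{Artinian reduction} $R/(y_1,\ldots,y_d)$, whose total length is at most $D^h$ because $I$ contains $h$ forms of degree $D$ forming a regular sequence together with the $y_i$; its length equals $\ell(R/(y_1,\ldots,y_d))-\e(R)$. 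Relatedly, an additive reduction of the form $\reg(S/I)\le \reg(\bar S/\bar I)+(\text{bounded term})$ cannot hold: iterated over $d$ hyperplane sections it would give a singly exponential bound in $n$, contradicting the Mayr--Meyer examples. The recursion that actually works is the multiplicative inequality \eqref{equation CS} from \cite{CS}, $\reg(R)\le\max\{D,\reg(R^{(1)})\}+\prod_{i=1}^{d-1}\reg(R^{(i)})\cdot(\ell(R^{(d)})-\e(R))$, applied simultaneously to every member $R^{(j)}$ of the whole flag of sections; the exponent $2^{d-2}$ comes from the telescoping identity $A_j=\reg(R^{(j)})A_{j+1}$ and the squaring $\reg(R^{(j)})\le(\max\{D,\reg(R^{(j+1)})\}+A_{j+1})^2$. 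An induction on $d$ that passes to a single hyperplane section and invokes the stated bound for $\bar S/\bar I$ does not close: one application of \eqref{equation CS} leaves the full product $\prod_{i=1}^{d-1}\reg(R^{(i)})$, which a single inductive hypothesis at level $d-1$ does not control sharply enough to produce the exponent $2^{d-2}$.
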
 

For zero-dimensional ideals, a sharp upper bound is predicted by the Eisenbud-Green-Harris conjecture \cite{EGH,EGH_CB}. In Corollary \ref{Corollary EGH regularity} we compute this upper bound explicitly, and thanks to Theorem \ref{THMX A} we show show that it holds in some cases (see Remarks \ref{Remark EGH} and \ref{Remark EGH1}).

We would like to point out that, for general upper bounds on $\reg(S/I)$ such as the ones of Theorem \ref{THMX A}, a double exponential behavior in $n$ with base depending on $D$, is inevitable. This is because of a series of well-known examples due to Mayr and Meyer \cite{MM} (see also \cite{BS-MM, Koh}). However, thanks to Theorem \ref{THMX A}, we are able to drastically improve the upper bound $\reg(I) \leq (2D)^{2^{n-2}}$ by dropping a factor of $2^{2^{n-2}}$. In fact, we prove that, if $n \geq 3$ and $I$ is a homogeneous ideal generated in degree at most $D$, then $\reg(I) \leq D^{2^{n-2}}$ (see Corollary \ref{Corollary bound ideals}).

\section{New upper bounds on the regularity of ideals} \label{Section regularity}

Throughout this article, $S$ denotes a standard graded polynomial ring $\kk[x_1,\ldots,x_n]$ over a field $\kk$, and $\m=(x_1,\ldots,x_n)$ its 
graded maximal ideal. Replacing $\kk$ with a field extension will not affect our considerations, therefore we will harmlessly assume throughout that $\kk$ is infinite. Given a finitely generated graded $S$-module $M = \bigoplus_{j \in \ZZ} M_j$, we let $\HF(M;j) = \dim_\kk(M_j)$ denote its Hilbert function in degree $j \in \ZZ$. We let $\reg(M) = \sup\{i+j \mid H^i_\m(M)_j \ne 0\}$ be its Castelnuovo-Mumford regularity, where $H^i_\m(-)$ denotes the $i$-th graded local cohomology module with support in $\m=(x_1,\ldots,x_n)$. Equivalently, $\reg(M) = \sup\{j-i \mid \Tor_i^S(\kk,M)_j \ne 0\}$ (see \cite{EisenbudGoto}).

The next result, even if it involves rather basic techniques, is the crucial point of this article. We estimate the regularity of an ideal by means of the regularity of a related module, which turns out to be linearly presented. This fact, combined with a result of Chardin, Fall and Nagel \cite{CFN}, leads to surprising linear upper bounds for the regularity of ideals of dimension at most one.

\begin{theorem} \label{THM bound regularity dim at most one} 
Let $I$ be a homogeneous ideal such that $\dim(S/I) \leq 1$. Assume that $I$ is generated in degree at most $D$, and let $c=\HF(S/I;D)$. Then $\reg(S/I) \leq D+c-1$.
\end{theorem}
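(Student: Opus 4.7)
The plan is to bound $\reg(S/I)$ by analysing the submodule $M = (\m^D + I)/I \subseteq S/I$, proving that $M$ is linearly presented, and then invoking the regularity bound of Chardin, Fall and Nagel \cite{CFN} for linearly presented modules of dimension at most one. The first step is a reduction: from the short exact sequence
\[
0 \to M \to S/I \to S/(\m^D + I) \to 0
\]
and the standard regularity inequality for short exact sequences, one gets $\reg(S/I) \leq \max\{\reg M,\, \reg(S/(\m^D + I))\}$; since $S/(\m^D+I)$ is supported in degrees strictly less than $D$, it has regularity at most $D-1$. Hence it suffices to prove $\reg M \leq D + c - 1$.

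Next I show that $M$ is generated in degree $D$ by $c$ elements and has first syzygies concentrated in degree $D+1$. Because $I$ is generated in degrees at most $D$, one has $I_{j+1} = \m I_j$ for every $j \geq D$, and therefore also $(S/I)_{j+1} = \m \cdot (S/I)_j$ for $j \geq D$; this yields the generation claim, since then $M$ is generated as an $S$-module by any $\kk$-basis of $M_D = (S/I)_D$. The same observation shows that $I_{\geq D} := I \cap \m^D$ is generated in degree $D$ by a $\kk$-basis of $I_D$. Now apply $\Tor^S(\kk, -)$ to the short exact sequence
\[
0 \to I_{\geq D} \to \m^D \to M \to 0.
\]
Since $I_{\geq D}$ and $\m^D$ are both generated in degree $D$, the induced map $\Tor_0^S(\kk, I_{\geq D}) \to \Tor_0^S(\kk, \m^D)$ is the natural inclusion $I_D \hookrightarrow S_D$, which is injective. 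The long exact sequence then produces a surjection $\Tor_1^S(\kk, \m^D) \twoheadrightarrow \Tor_1^S(\kk, M)$; since $\m^D$ admits a linear free resolution (a classical fact), $\Tor_1^S(\kk, \m^D)$ is concentrated in degree $D+1$, and hence so is $\Tor_1^S(\kk, M)$. Thus $M$ is linearly presented.

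Finally, as a submodule of $S/I$, the module $M$ satisfies $\dim M \leq 1$; after shifting $M$ so that it is generated in degree zero, the hypotheses of the Chardin--Fall--Nagel bound for linearly presented modules of dimension at most one are met, and their result yields $\reg M \leq D + c - 1$. Combined with the reduction above this gives $\reg(S/I) \leq D + c - 1$, as desired. I expect the main obstacle to be setting up the linear presentation of $M$ cleanly: the key insight is that the comparison with $\m^D$ via the displayed short exact sequence allows the linearity of the resolution of $\m^D$ to propagate, through the $\Tor$ long exact sequence, to the quotient $M$, exactly because $I_{\geq D}$ sits inside $\m^D$ with all its minimal generators in the same degree $D$.
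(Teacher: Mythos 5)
Your proof is correct and follows essentially the same route as the paper: both reduce to the module $M=(\m^D+I)/I$, show it is generated in degree $D$ by $c$ elements with linear relations (propagating the linearity of the resolution of $\m^D$), and conclude via the Chardin--Fall--Nagel bound for linearly presented modules of dimension at most one. The only cosmetic difference is that you extract the linear presentation from $0 \to I\cap\m^D \to \m^D \to M \to 0$ together with injectivity on $\Tor_0$, whereas the paper uses the four-term sequence $0 \to U(-D) \to S^{\oplus c}(-D) \to S/I \to S/(I+(s_1,\ldots,s_c)) \to 0$ and the vanishing of $\Tor_1^S(\kk,S/I)_j$ for $j>D$.
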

\begin{proof}
If $c=0$, then there is nothing to prove. Moreover, when $n <2$ the claim is straightforward to check, so we will assume that $n \geq 2$.  Let $s_1,\ldots,s_c$ be elements of $S$ whose images modulo $I$ form a $\kk$-basis of $(S/I)_D$. Since $I+(s_1,\ldots,s_c) \subseteq \m^D$, we have an exact sequence 
\[
\xymatrix{
0 \ar[r] & U(-D) \ar[r] & S^{\oplus c}(-D) \ar[rr]^-{[s_1, \ldots, s_c]} && S/I \ar[r] & S/\m^D \ar[r] & 0
}
\]
for some graded submodule $U$ of $F=S^{\oplus c}$. Thus, we obtain that $\reg(S/I) \leq \max\{D+\reg(F/U),\reg(S/\m^D)\} = D+\reg(F/U)$. 

From the exact sequences $\Tor_2^S(\kk,S/\m^D)_j \to \Tor_1^S(\kk,F/U(-D))_j \to \Tor_1^S(\kk,S/I)_j$ of vector spaces, we deduce that $U$ is generated in degree at most one, since both modules on the sides are zero for $j > D+1$. As a consequence, $F/U$ is a module of dimension at most one, generated by $c$ elements of degree zero and related in degrees at most one. If follows from \cite[Theorem 2.1 (ii)]{CFN} applied to the case $l=1$ that $\reg(F/U) \leq c-1$, and the proof is complete.
\end{proof}

For zero-dimensional ideals, the upper bound of Theorem \ref{THM bound regularity dim at most one} had already been proved by Bruns, Conca and R\"omer \cite[Remark 3.10 (b)]{BCR}. However, we point out that in the zero-dimensional case the best possible upper bound in terms of $D$ and $c$ is given by the Eisenbud-Green-Harris Conjecture (for instance, see \cite{EGH,CM}). In order to be more specific, we first recall some definitions and notation. Let $d_1\leq \ldots \leq d_h$ be non negative integers. An ideal $\mathcal L$ is called a $(d_1,\ldots,d_h)$-LPP ideal if $\mathcal L = (x_1^{d_1},\ldots,x_h^{d_h})+L$, where $L$ is a lex-segment ideal. We will make $L$ unique by always choosing the largest possible lex-segment ideal for which the equality holds. The Eisenbud-Green-Harris Conjecture (henceforth, EGH) states that, given any homogeneous ideal $I$ containing a regular sequence of degrees $d_1,\ldots,d_h$, there exists a $(d_1,\ldots,d_h)$-LPP ideal with the same Hilbert function as $I$.

Given a homogeneous ideal $I$ that contains a regular sequence of degrees $d_1,\ldots,d_h$, for $D \geq 0$ we let $\LPP(I;d_1,\ldots,d_h;D)$ be the $(d_1,\ldots,d_h)$-LPP ideal $\mathcal L = (x_1^{d_1},\ldots,x_h^{d_h}) + L$ such that $L$ is either zero or is generated in degree $D$, and $\HF(S/I;D) = \HF(S/\mathcal L;D)$. An equivalent way of stating the EGH Conjecture is to assert that, for any homogeneous ideal $I$ containing a regular sequence of degrees $d_1,\ldots,d_h$, and for any integer $D \geq 0$, one has that $\HF(S/I;D+1) \leq \HF(S/\LPP(I;d_1,\ldots,d_h;D);D+1)$. 

In what follows, whenever $h=n$ and $d_1=\ldots = d_n =D$, we denote $\LPP(I;d_1,\ldots,d_n;D)$ simply by $\LPP(I;D)$. We ask the following question.

\begin{question} \label{Question weak EGH} Let $S=\kk[x_1,\ldots,x_n]$, and $I$ be a zero-dimensional homogeneous ideal generated in degree at most $D$. Is $\reg(S/I) \leq \reg(S/\LPP(I;D))$?
\end{question}

If the EGH Conjecture were to be true, then the Hilbert function of $S/I$ would be point-wise bounded above by the Hilbert function of $S/\LPP(I;D)$, and this would clearly give a positive answer to Question \ref{Question weak EGH}. In this sense, Question \ref{Question weak EGH} can be seen as a weaker version of the EGH Conjecture for zero-dimensional ideals.

In order to be more specific about the upper bound predicted by Question \ref{Question weak EGH}, we explicitly compute the regularity of a zero-dimensional LPP ideal.

\begin{lemma} \label{Lemma regularity LPP} Let $d_1 \leq \ldots \leq d_n$ and $D \geq 2$ be integers, with $D \leq \sum_{i=1}^n (d_i-1)$. Let $\mathcal L = (x_1^{d_1},\ldots,x_n^{d_n}) + L$ be a $(d_1,\ldots,d_n;D)$-LPP ideal, and assume that $\mathcal L \ne (x_1^{d_1},\ldots,x_n^{d_n})$. Set $u=x_a^{t_a} \cdot x_{a+1}^{t_{a+1}} \cdots x_b^{t_b}$, with $t_a \ne 0$, be the smallest monomial with respect to the lex order which has degree $D$, and belongs to $L$. Then $\reg(S/\mathcal L) = t_a-1+\sum_{i=a+1}^n (d_i-1)$. 
\end{lemma}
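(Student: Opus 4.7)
The proof computes the top nonzero degree of the Artinian algebra $S/\mathcal{L}$, which equals its regularity. A standard monomial $m = x_1^{c_1}\cdots x_n^{c_n}$ is characterized by $c_i \le d_i - 1$ for every $i$, together with $m \notin L$. Throughout I work under the convention (forced by $\mathcal L \ne (x_1^{d_1},\ldots,x_n^{d_n})$) that $t_i \le d_i - 1$ for every $i$ in the support of $u$, i.e.\ $u \notin (x_1^{d_1},\ldots,x_n^{d_n})$.

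A preliminary step identifies $L_N$ for each $N \ge D$. Since $L$ is a lex-segment ideal generated in degree $D$ with $L_D$ the lex-segment of threshold $u$, an induction on $N - D$ identifies $L_N$ with the lex-segment of degree $N$ and threshold $u \cdot x_n^{N-D}$. The inductive step uses that the lex-smallest monomial of $\mathfrak{m} \cdot L_N$ is $u \cdot x_n^{N+1-D}$, together with a short case analysis on the last nonzero variable of a candidate monomial, showing that every degree-$(N{+}1)$ monomial lex-exceeding this threshold has a degree-$N$ divisor in $L_N$. Consequently, for $\deg m \ge D$, the condition $m \notin L$ becomes $m <_{\mathrm{lex}} u \cdot x_n^{\deg m - D}$, while for $\deg m < D$ it is automatic.

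For the lower bound, set
\[
m_0 = x_a^{t_a-1} \cdot x_{a+1}^{d_{a+1}-1} \cdots x_n^{d_n-1},
\]
of degree $(t_a-1) + \sum_{i=a+1}^n (d_i-1)$. Since $t_a \le d_a$, every exponent of $m_0$ is at most $d_i - 1$, so $m_0 \notin (x_1^{d_1},\ldots,x_n^{d_n})$. Comparing $m_0$ lexicographically with $u \cdot x_n^{\deg m_0 - D}$, both vanish at positions $<a$ and first differ at position $a$, where $m_0$ has exponent $t_a - 1 < t_a$; hence $m_0 <_{\mathrm{lex}} u \cdot x_n^{\deg m_0 - D}$ and $m_0 \notin L$. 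Thus $m_0$ is a standard monomial of the claimed degree, which proves $\reg(S/\mathcal L) \ge (t_a-1)+\sum_{i=a+1}^n (d_i-1)$.

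For the upper bound, suppose $m$ is a standard monomial of degree $N > (t_a-1) + \sum_{i>a}(d_i-1)$; one checks that $N \ge D$ automatically. Combining $c_i \le d_i - 1$ with the degree hypothesis gives $\sum_{i \le a} c_i \ge t_a$. A positionwise lex comparison of $m$ with $u \cdot x_n^{N-D}$ forces $c_i = 0$ for $i < a$ (else $m$ would exceed the threshold at position $i$, contradicting $m \notin L$), so $c_a \ge t_a$. If $c_a > t_a$, the comparison at position $a$ yields $m \ge_{\mathrm{lex}} u \cdot x_n^{N-D}$, another contradiction. Hence $c_a = t_a$, and the degree inequality then forces $c_i = d_i - 1$ for every $i > a$, so $m = x_a^{t_a} \cdot x_{a+1}^{d_{a+1}-1}\cdots x_n^{d_n-1}$. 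Using $t_i \le d_i - 1$ for $a \le i \le b$, a pointwise comparison shows that every exponent of $m$ weakly dominates the corresponding exponent of $u \cdot x_n^{N-D}$, so $m \ge_{\mathrm{lex}} u \cdot x_n^{N-D}$, once more contradicting $m \notin L$. The principal obstacle is the preliminary identification of $L_N$ as a shifted lex-segment together with the careful positionwise bookkeeping that collapses the upper bound to this single explicit extremal monomial.
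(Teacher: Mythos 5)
Your overall strategy is reasonable and close in spirit to the paper's own argument: the paper tracks, for each degree $j$, the lex\emph{-smallest} monomial $v_j$ of degree $j$ outside $(x_1^{d_1},\ldots,x_n^{d_n})$ and locates the top degree in which $v_j\notin\mathcal L$, while you identify $L_N$ as the lex segment with threshold $u x_n^{N-D}$ and then pin down the extremal standard monomial directly. Your preliminary identification of $L_N$, the lower-bound witness, and the positionwise bookkeeping in the upper bound are all fine \emph{given} your standing assumption. The problem is that assumption.

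You assert that $u\notin(x_1^{d_1},\ldots,x_n^{d_n})$ is ``forced by $\mathcal L\ne(x_1^{d_1},\ldots,x_n^{d_n})$.'' It is not. The paper normalizes $L$ to be the \emph{largest} lex-segment ideal with $\mathcal L=(x_1^{d_1},\ldots,x_n^{d_n})+L$, and under that convention only the lex-\emph{predecessor} of $u$ is guaranteed to lie outside $(x_1^{d_1},\ldots,x_n^{d_n})$; $u$ itself need not. Concretely, take $n=3$, $d_1=d_2=d_3=3$, $D=4$, and let $L$ be the lex-segment ideal generated by the degree-$4$ segment ending at $x_1x_2^3$, so that $\mathcal L=(x_1^3,x_2^3,x_3^3,x_1^2x_2^2,x_1^2x_2x_3,x_1^2x_3^2)$ and $\HF(S/\mathcal L;4)=3$. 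The segment cannot be enlarged because $x_1x_2^2x_3\notin\mathcal L$, so the paper's convention gives $u=x_1x_2^3\in(x_2^3)$, with $a=1$, $t_1=1$, $t_2=3=d_2$. Here the final step of your upper bound collapses to $m=x_1x_2^2x_3^2$, whose $x_2$-exponent $2$ does \emph{not} dominate $t_2=3$; in fact $m<_{\mathrm{lex}}ux_3$, so $m\notin L_5$ and $m$ really is a standard monomial of degree $5$. (In the other degenerate case $u=x_n^D$ with $D>d_n$, your witness $m_0=x_n^{D-1}$ even lies in $(x_n^{d_n})$, so the lower bound breaks too.) Note that this is not only a gap in your write-up: in the example above the displayed formula returns $4$ while $\reg(S/\mathcal L)=5$, so the lemma as literally stated, with the ``largest $L$'' normalization, is false, and the paper's own proof stumbles at the same point. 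Your argument, and the formula, become correct exactly when $u$ is read as the smallest degree-$D$ monomial of $L$ \emph{not} lying in $(x_1^{d_1},\ldots,x_n^{d_n})$ (equivalently, under the smallest-$L$ normalization); you need to either prove that reading is the intended one and adopt it explicitly, or supply a genuine justification for $u\notin(x_1^{d_1},\ldots,x_n^{d_n})$ --- the hypothesis $\mathcal L\ne(x_1^{d_1},\ldots,x_n^{d_n})$ alone does not provide it.
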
 
\begin{proof}
Let $s = \sum_{i=1}^n (d_i-1)$ and $J=(x_1^{d_1},\ldots,x_n^{d_n})$. Let $j \leq s$, and let $v_j$ be the smallest monomial with respect to the lex order which has degree $j$, and does not belong to $J$. Then $v_j \in \mathcal L$ if and only if $S_j \subseteq \mathcal L_j$. Thus, the regularity of $S/\mathcal L$ is achieved in the highest degree $j$ for which $v_j$ does not belong to $\mathcal L$. It is easy to see that, if $v_j = x_1^{j_1} \cdots x_n^{j_n}$, and $i$ is the smallest index for which $j_i \ne 0$, then $j_r=  d_r-1$ for all $r > i$, and $v_{j-1} = x_i^{j_i-1}x_{i+1}^{d_{i+1}-1} \cdots x_n^{d_n-1}$. Therefore, $v_j  \notin \mathcal L$ if and only if $j_r = 0$ for all $r \in \{1,\ldots,a-1\}$, and $j_a<t_a$. The largest $j$ for which $v_j$ satisfies this condition is $j=t_a-1+\sum_{i=a+1}^n (d_i-1)$. 
\end{proof}

The previous considerations, together with Lemma \ref{Lemma regularity LPP}, lead to the following result.

\begin{corollary} \label{Corollary EGH regularity} 
Let $I$ be a zero-dimensional ideal generated in degree at most $D$. If the EGH Conjecture holds true, then in the same notation as in Lemma \ref{Lemma regularity LPP} we have that 
\[
\reg(S/I) \leq (n-a)(D-1)+t_a-1.
\]
\end{corollary}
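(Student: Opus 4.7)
The plan is to use the EGH Conjecture to bound $\reg(S/I)$ above by $\reg(S/\LPP(I;D))$, and then apply Lemma \ref{Lemma regularity LPP} with $d_1=\cdots=d_n=D$ to identify this bound with $(n-a)(D-1)+t_a-1$.

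First I verify that $I$ contains a regular sequence of degrees $(D,\ldots,D)$, so that EGH applies in the desired form and $\LPP(I;D)$ is defined. Since $I$ is $\m$-primary and generated in degrees at most $D$, the subideal $(I_D)\subseteq I$ is itself $\m$-primary: given $p\neq 0$, a generator $f\in I_d$ with $d\leq D$ nonzero at $p$ can be multiplied by $x_k^{D-d}$ for an index $k$ with $p_k\neq 0$ to produce an element of $I_D$ nonzero at $p$. Infiniteness of $\kk$ then supplies, via prime avoidance, $n$ generic $\kk$-linear combinations in $I_D$ forming a regular sequence. EGH therefore produces a $(D,\ldots,D)$-LPP ideal $\mathcal{L}'=(x_1^D,\ldots,x_n^D)+L'$ with the same Hilbert function as $I$, so $\reg(S/I)=\reg(S/\mathcal{L}')$.

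Next I compare $\mathcal{L}'$ with $\LPP(I;D)=(x_1^D,\ldots,x_n^D)+L$. Both $\mathcal{L}'_D$ and $\LPP(I;D)_D$ are of the form $J_D+\ell_D$, where $J=(x_1^D,\ldots,x_n^D)$ and $\ell_D\subseteq S_D$ is a lex segment. Because lex segments in $S_D$ are totally ordered by inclusion, the monomial set $J_D\cup\ell_D$ is determined by its cardinality; since both subspaces have the same dimension $\binom{D+n-1}{n-1}-c$, they must coincide, giving $\mathcal{L}'_D=\LPP(I;D)_D$. Now $\LPP(I;D)$ is generated entirely in degree $D$, while $\mathcal{L}'$ may carry further lex-segment generators in higher degrees, so
\[
\LPP(I;D)_j=S_{j-D}\cdot\LPP(I;D)_D=S_{j-D}\cdot\mathcal{L}'_D\subseteq\mathcal{L}'_j\qquad\text{for all }j\geq D.
\]
Consequently $\HF(S/\mathcal{L}';j)\leq\HF(S/\LPP(I;D);j)$ for $j\geq D$, and since both modules are zero-dimensional (so their regularities equal the top degree of nonvanishing Hilbert function), I conclude $\reg(S/I)=\reg(S/\mathcal{L}')\leq\reg(S/\LPP(I;D))$.

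Finally, Lemma \ref{Lemma regularity LPP} applied with $d_1=\cdots=d_n=D$ evaluates $\reg(S/\LPP(I;D))=t_a-1+\sum_{i=a+1}^n(D-1)=(n-a)(D-1)+t_a-1$, which is the claimed bound. The most delicate step is the identification $\mathcal{L}'_D=\LPP(I;D)_D$ and the resulting inclusion in higher degrees: EGH supplies only some LPP ideal $\mathcal{L}'$ with the same Hilbert function as $I$, so one must exploit the particular shape of $\LPP(I;D)$---generated in the single degree $D$---to bring the comparison all the way to $\LPP(I;D)$ itself.
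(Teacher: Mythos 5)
Your proposal is correct and follows essentially the same route as the paper: the paper's proof simply combines the earlier observation that EGH forces $\HF(S/I;j)\leq \HF(S/\LPP(I;D);j)$ for all $j$ (hence $\reg(S/I)\leq\reg(S/\LPP(I;D))$, regularity being the top nonvanishing degree for Artinian quotients) with the formula of Lemma \ref{Lemma regularity LPP} for $d_1=\cdots=d_n=D$. You merely make explicit the details the paper leaves implicit, namely the existence of the degree-$D$ regular sequence and the comparison between the LPP ideal furnished by EGH and the specific ideal $\LPP(I;D)$, and those verifications are sound.
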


\begin{remark} Assuming the validity of the EGH Conjecture, the upper bound of Corollary \ref{Corollary EGH regularity} is clearly sharp, as it is achieved by $S/\LPP(I;D)$.
\end{remark}

\begin{remark} \label{Remark EGH} In the setup of Corollary \ref{Corollary EGH regularity}, if we write $\LPP(I;D) = (x_1^D,\ldots,x_n^D) + L$ and we assume that $c=\HF(S/I;D) < D$, then the smallest monomial of degree $D$ inside $L$ is $x_{n-1}^{c+1}x_n^{D-(c+1)}$. It then follows from Lemma \ref{Lemma regularity LPP} that $\reg(S/\LPP(I;D))  = c+D-1$, which is the upper bound obtained in Theorem \ref{THM bound regularity dim at most one}. It follows that Question \ref{Question weak EGH} has positive answer when $c<D$. 
\end{remark}

If more information about the ideal $I$ is available, then one can aim at sharper upper bounds. For instance, if $I$ contains a regular sequence of degrees $d_1 \leq \ldots \leq d_n$, and the EGH Conjecture with respect to this sequence of degrees holds true, then $\reg(S/I) \leq \reg(S/\LPP(I;d_1,\ldots,d_n;D))$ for all $D \geq 0$. Note that an explicit formula for the right-hand side can be obtained by means of Lemma \ref{Lemma regularity LPP}. One concrete example in which this remark applies is when $d_{i+1} \geq \sum_{j=1}^i (d_i-1)$ for all $i=1,\ldots,n-1$, as proved in \cite[Theorem A]{CDS_EGH}. 

\begin{remark} \label{Remark EGH1} In the setup of Corollary \ref{Corollary EGH regularity}, if $d_1\leq \ldots \leq d_n \leq D$, then one can show that $\HF(\LPP(I;D);j) \leq \HF(\LPP(I;d_1,\ldots,d_n;D);j)$ for all $j \in \ZZ$. Therefore, if the EGH is known to hold for a sequence of degrees $d_1 \leq \ldots \leq d_n \leq D$, then Question \ref{Question weak EGH} has a positive answer for ideals that contain a regular sequence of degrees $d_1,\ldots,d_n$.
\end{remark}

We now return to our original goal of producing improved upper bounds for the Castelnuovo-Mumford regularity of ideals in any dimension. More specifically, we aim at improving a standard recursive argument used in \cite{CS}, which allows to drop the dimension by one at a time, by including several correction terms. Once we reach dimension one, we will finally use the new bounds obtained in Theorem \ref{THM bound regularity dim at most one}. Let us introduce some notation first.

Let $I$ be a homogeneous ideal, and set $R=S/I$. Let $y_1,\ldots,y_t$ be a sequence of linear forms in $R$, and for $i=1,\ldots,t$ let $R^{(i)}$ denote the ring $R/(y_1,\ldots,y_{i})$. For convenience, we let $R^{(0)}=R$.
We recall that a homogeneous non-zero element $y$ is called {\it filter regular} for $R$ if $0:_R y$ has finite length. The sequence $y_1,\ldots,y_t$ is called {\it filter regular} for $R$ if $y_{i+1}$ is filter regular over $R^{(i)}$ for all $i\in \{0,\ldots,t-1\}$. If $\kk$ is infinite, a sufficiently general linear form $\ell \in R^{(i)}$ is a filter regular element for $R^{(i)}$. In particular, a sufficiently general choice of minimal generators of $\m$ forms a filter regular sequence for $R$. Let $y_1,\ldots,y_t$ be a filter regular sequence for $R$. Observe that, in this case, $(I+(y_1,\ldots,y_i))^{\sat} =  (I+(y_1,\ldots,y_i)): \m^\infty = (I+(y_1,\ldots,y_i)):y_{i+1}^\infty$ for all $0 \leq i \leq t-1$, where for $i=0$ we simply mean that $I^{\sat} = I:y_1^\infty$. 

Now assume that $R=S/I$ has Krull dimension $d>1$, and $I$ is generated in degree at most $D$. Let $y_1,\ldots,y_d$ be a filter regular sequence for $R$ consisting of linear forms. If $\ell(-)$ denotes the length of a module, it is shown in the proof of \cite[Theorem 2.4]{CS} that
\begin{equation}
\label{equation CS}
\reg(R) \leq \max\{D,\reg(R^{(1)})\} + \left( \prod_{i=1}^{d-1} \reg(R^{(i)})\right)\cdot  \ell\left(\frac{(I+(y_1,\ldots,y_{d-1}))^{\sat}+(y_d)}{I+(y_1,\ldots,y_d)}\right).
\end{equation}

Observe that 
\begin{align*}
\ell\left(\frac{(I+(y_1,\ldots,y_{d-1}))^{\sat}+(y_d)}{I+(y_1,\ldots,y_d)}\right) & = \ell(R^{(d)}) - \ell\left(\frac{S}{(I+(y_1,\ldots,y_{d-1}))^{\sat}+(y_d)}\right) \\
& = \ell(R^{(d)}) - \e(R). 
\end{align*}

A typical estimate for $\ell(R^{(d)})$ comes from the fact that we can find forms $f_1,\ldots,f_h \in I$ of degree $D$ such that $f_1,\ldots,f_h,y_1,\ldots,y_d$ forms a regular sequence of maximal length in $S$. Thus $\ell(R^{(d)}) \leq \ell(S/(f_1,\ldots,f_h,y_1,\ldots,y_d)) = D^h$. We here provide a slightly more refined estimate for $\ell(R^{(d)})$.

To avoid any confusion, and to be consistent with the notation used in the proof of Theorem \ref{THM bound regularity dim at least two}, in the next lemma we consider a polynomial ring in $h$ variables, rather than in $n$ variables.

\begin{lemma} \label{Lemma bound Artinian reduction} Let $S=\kk[x_1,\ldots,x_h]$, and $I \subseteq S$ be a homogeneous ideal, generated in degree at most $D$, and such that $R=S/I$ is Artinian. Let $c'=\HF(R;D)$. Then 
\[
\displaystyle \ell(R) \leq D^h-{D+h-1 \choose h-1} + c' +h \leq D^h.
\]
\end{lemma}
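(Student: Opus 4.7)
My plan is to reduce the statement to a length comparison with an Artinian complete intersection. I would first invoke a prime-avoidance argument (analogous to the one recalled just before the lemma) to produce a regular sequence $f_1,\ldots,f_h \in I$ of forms of degree exactly $D$. This is possible because $R$ is Artinian, hence $I$ is $\m$-primary, so for any prime $\p \ne \m$ the space $I_D$ is not contained in $\p$ (multiplying each minimal generator of $I$ by a suitable power of a variable outside of $\p$ witnesses this), and the $f_i$ can be selected inductively. Setting $J = (f_1,\ldots,f_h) \subseteq I$, the complete intersection $S/J$ has length $\ell(S/J) = D^h$, and the short exact sequence
\[
0 \longrightarrow I/J \longrightarrow S/J \longrightarrow R \longrightarrow 0
\]
gives $\ell(R) = D^h - \ell(I/J)$. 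Thus the first inequality will follow once I show $\ell(I/J) \geq \binom{D+h-1}{h-1} - c' - h$.

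To establish this lower bound, my plan is to look only at the degree-$D$ graded piece, using $\ell(I/J) \geq \dim_\kk (I/J)_D = \dim_\kk I_D - \dim_\kk J_D$. By definition of $c'$, the quotient $R_D$ has dimension $c'$, so $\dim_\kk I_D = \binom{D+h-1}{h-1} - c'$. On the other hand $J_D$ coincides with the $\kk$-span of $f_1,\ldots,f_h$ (since $J$ is generated in degree $D$), and these forms are $\kk$-linearly independent: any nontrivial $\kk$-linear relation would express some $f_i$ as an element of the ideal generated by the remaining ones, contradicting the regular sequence property. Hence $\dim_\kk J_D = h$, which yields the claimed estimate and therefore the first inequality.

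For the second inequality $D^h - \binom{D+h-1}{h-1} + c' + h \leq D^h$, it suffices to verify $c' + h \leq \binom{D+h-1}{h-1}$, which is equivalent to $\dim_\kk I_D \geq h$; this is immediate from the existence of the $h$ linearly independent forms $f_1,\ldots,f_h$ inside $I_D$. The only step I expect to require any care is the initial prime-avoidance argument producing a regular sequence of forms of degree exactly $D$ inside $I$; everything else in the argument amounts to counting dimensions in a single degree and exploiting the additivity of length along the short exact sequence above.
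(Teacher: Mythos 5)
Your proof is correct and follows essentially the same route as the paper's: compare $\ell(R)$ with the length $D^h$ of the complete intersection $S/J$ for a regular sequence $J=(f_1,\ldots,f_h)\subseteq I$ of forms of degree $D$, and bound $\ell(I/J)$ from below by the dimension of its degree-$D$ graded piece, computed via $\HF(I;D)=\binom{D+h-1}{h-1}-c'$ and $\HF(J;D)\leq h$. The only difference is that you spell out the prime-avoidance argument producing the regular sequence, which the paper simply asserts.
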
 
\begin{proof} Since $R$ is Artinian and generated in degree at most $D$ the ideal $I$ contains an ideal $J$ generated a regular sequence of $h$ forms of degree $D$. Therefore, we get 
\begin{align*}
\ell(R) & = \ell(S/J) - \ell(I/J) \leq D^h - \HF(I/J;D)  \\
& = D^h - \HF(I;D) + \HF(J;D)  \leq D^h - {D+h-1 \choose h-1}+c' +h.
\end{align*}
Since $c' = \HF(S;D) - \HF(I;D) \leq {D+h-1 \choose h-1} - h$ always holds, the proof is complete. 
\end{proof}

\begin{notation} We let $\Phi(D,c',h) =D^h - {D+h-1 \choose h-1}+c' +h$. 
\end{notation}

We are finally ready to state and prove the second part of our main result.

\begin{theorem} \label{THM bound regularity dim at least two}
Let $S=\kk[x_1,\ldots,x_n]$, and $I\subseteq S$ be a homogeneous ideal of height $h \leq n-2$, generated in degree at most $D$, and let $d=n-h=\dim(S/I) \geq 2$. Let $y_1,\ldots,y_d$ be a system of parameters for $R=S/I$ consisting of linear forms. Let $c = \HF(R/(y_1,\ldots,y_{d-1}); D)$, and $c'=\HF(R/(y_1,\ldots,y_d);D)$. Then
\[
\reg(R) \leq \left((D+c-1) (\Phi(D,c',h)-\e(R)+1) \right)^{2^{d-2}}.
\]
\end{theorem}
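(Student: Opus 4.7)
The plan is to iterate the recursive inequality \eqref{equation CS} from \cite{CS}, using Theorem \ref{THM bound regularity dim at most one} as the base case and Lemma \ref{Lemma bound Artinian reduction} to supply the refined estimate on the length of the Artinian quotient. Since $\kk$ is infinite, a generic linear system of parameters in $R$ is filter regular, and the Hilbert-function values $\HF(R/(y_1,\ldots,y_k);D)$ attained by a generic l.s.o.p. are no larger than those attained by any other l.s.o.p. Thus it suffices to prove the bound for a generic (and therefore filter regular) l.s.o.p., so from now on I assume $y_1,\ldots,y_d$ is filter regular. Writing $B = D+c-1$ and $M = \Phi(D,c',h)-\e(R)+1$, Lemma \ref{Lemma bound Artinian reduction} combined with the length identity displayed just before it shows that the length term in \eqref{equation CS} is at most $M-1$, yielding
\[
\reg(R) \leq \max\{D,\reg(R^{(1)})\} + (M-1)\prod_{i=1}^{d-1}\reg(R^{(i)}).
\]
The goal $\reg(R) \leq (BM)^{2^{d-2}}$ is then proved by induction on $d \geq 2$.

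For the base case $d=2$, the ring $R^{(1)}$ is one-dimensional, generated in degree at most $D$, and satisfies $\HF(R^{(1)};D)=c$, so Theorem \ref{THM bound regularity dim at most one} yields $\reg(R^{(1)}) \leq B$; substitution gives $\reg(R) \leq B + B(M-1) = BM$, matching the claim since $2^{d-2}=1$.

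For the inductive step $d \geq 3$, I apply the induction hypothesis to each $R^{(i)}$ with $1 \leq i \leq d-2$, which, after a linear change of coordinates, is a quotient of $\kk[x_1,\ldots,x_{n-i}]$ by an ideal of the same height $h$, generated in degree at most $D$, and equipped with the residual filter regular l.s.o.p. $y_{i+1},\ldots,y_d$. The two Hilbert-function invariants are preserved, since $R^{(i)}/(y_{i+1},\ldots,y_{d-1}) = R/(y_1,\ldots,y_{d-1})$ and $R^{(i)}/(y_{i+1},\ldots,y_d) = R^{(d)}$; meanwhile $\e(R^{(i)}) \geq \e(R)$ follows by iterating the standard fact that a filter regular linear parameter $y$ on a graded module $N$ never decreases multiplicity, which one extracts by comparing leading coefficients of Hilbert polynomials in the exact sequence
\[
0 \to (0:_N y) \to N(-1) \xrightarrow{y} N \to N/yN \to 0.
\]
Hence the correction $M^{(i)} = \Phi(D,c',h)-\e(R^{(i)})+1$ satisfies $M^{(i)} \leq M$, and the inductive hypothesis gives $\reg(R^{(i)}) \leq (BM)^{2^{d-i-2}}$. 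For $i=d-1$, the ring $R^{(d-1)}$ is one-dimensional with $\HF(R^{(d-1)};D)=c$, so Theorem \ref{THM bound regularity dim at most one} again produces $\reg(R^{(d-1)}) \leq B$.

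Combining these estimates,
\[
\prod_{i=1}^{d-1}\reg(R^{(i)}) \leq B \cdot \prod_{i=1}^{d-2}(BM)^{2^{d-i-2}} = B \cdot (BM)^{2^{d-2}-1},
\]
while $\max\{D,\reg(R^{(1)})\} \leq (BM)^{2^{d-3}}$. Plugging into the recursion yields
\[
\reg(R) \leq (BM)^{2^{d-3}} + (M-1) \cdot B \cdot (BM)^{2^{d-2}-1} \leq (BM)^{2^{d-2}},
\]
where the final step reduces, after grouping $(BM)^{2^{d-2}} - (M-1)B(BM)^{2^{d-2}-1} = B(BM)^{2^{d-2}-1}$, to checking that $1 \leq B \cdot (BM)^{2^{d-3}-1}$, which holds because $B \geq 1$ (a standard graded algebra of positive dimension has $\HF(1) \geq 1$, forcing $c \geq 1$ when $D=1$, and otherwise $B \geq D-1 \geq 1$). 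The main delicate points I anticipate are verifying the multiplicity inequality $\e(R^{(i)}) \geq \e(R)$ along the iterated filter regular hyperplane sections and tracking that the invariants $c, c', h$ are genuinely unchanged after the coordinate change performed at each inductive step; once these are in place, the rest of the argument is a careful but routine exponent bookkeeping.
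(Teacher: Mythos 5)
Your proof is correct and follows essentially the same strategy as the paper's: iterate the recursion \eqref{equation CS}, bound the length term via Lemma \ref{Lemma bound Artinian reduction}, and invoke Theorem \ref{THM bound regularity dim at most one} once the dimension reaches one. The only difference is organizational — you run a formal induction on $d$ (which is why you need the extra, correct, verifications that $D$, $h$, $c$, $c'$ persist and that $\e$ does not decrease along the filter regular hyperplane sections), whereas the paper unrolls the recursion directly by repeated squaring; both yield the same exponent bookkeeping.
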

\begin{proof}
If $z_1,\ldots,z_d$ is a general choice of a linear system of parameters, then for all $t \in \{1,\ldots,d\}$ and all $j \in \ZZ$ we have that $\HF(R/(z_1,\ldots,z_t);j) \leq \HF(R/(y_1,\ldots,y_t);j)$. Therefore, after possibly replacing $y_1,\ldots,y_d$ with a general choice of a linear system of parameters, we may assume $y_1,\ldots,y_d$ is a filter regular sequence for $R$. For $i=1,\ldots,d$, let $R^{(i)} =R/(y_1,\ldots,y_i)$. Applying (\ref{equation CS}) to $R^{(1)}$ yields $\reg(R^{(1)}) \leq \max\{D,\reg(R^{(2)})\} + A_2$, where we set $A_2= \reg(R^{(2)})\cdots \reg(R^{(d-1)}) \cdot (\ell(R^{(d)})- \e(R))$. Applying (\ref{equation CS}) to $R$, and using the estimate for $\reg(R^{(1)})$ which we have just obtained, we get that
\begin{align*}
\reg(R) &\leq \max\{D,\reg(R^{(2)})\} + A_2 +\left( \max\{D,\reg(R^{(2)})\} + A_2\right) \cdot A_2 \\
& = (1+A_2) \cdot (\max\{D,\reg(R^{(2)})\} + A_2)  \leq \left(\max\{D,\reg(R^{(2)})\} + A_2 \right)^2.
\end{align*}
Repeating the same argument for $R^{(j)}$, with $2 \leq j \leq d-2$, we obtain that
\[
\reg(R)  \leq \left(\max\{D,\reg(R^{(d-1)})\}+A_{d-1}\right)^{2^{d-2}} \leq  \left((D+c-1)+ A_{d-1}\right)^{2^{d-2}},
\]
where $A_{d-1} = \reg(R^{(d-1)}) \cdot (\ell(R^{(d)})-\e(R))$ and the last inequality follows from the fact that $\max\{D,\reg(R^{(d-1)})\}\leq D+c-1$, by Theorem \ref{THM bound regularity dim at most one}. By Lemma \ref{Lemma bound Artinian reduction} we have $\ell(R^{(d)}) \leq \Phi(D,c',h)$, and thus $A_{d-1} 
\leq (D+c-1)(\Phi(D,c',h) -\e(R))$ follows again from Theorem \ref{THM bound regularity dim at most one}. We conclude that
\[
\reg(R) \leq \left((D+c-1) (\Phi(D,c',h)-\e(R)+1) \right)^{2^{d-2}}. \qedhere
\]
\end{proof}

\begin{remark}
By Lemma \ref{Lemma bound Artinian reduction} we have that $\Phi(D,c',h) \leq D^h$, and therefore the bound 
\[
\reg(R) \leq \left((D+c-1) (D^h-\e(R)+1) \right)^{2^{d-2}}
\]
claimed in Theorem \ref{THMX A} now follows from Theorem \ref{THM bound regularity dim at least two}.
\end{remark}

\begin{remark} Let $R=S/I$ be a $d$-dimensional ring and $a=\HF(R;D)$. Let $y_1,\ldots,y_d$ be a linear system of parameters for $S/I$, let $c=\HF(R/(y_1,\ldots,y_{d-1});D)$ and $c'=\HF(R/(y_1,\ldots,y_d);D)$, as in Theorem \ref{THM bound regularity dim at least two}. Consider the $D$-th Macaulay expansion of $a$ (for instance, see \cite{BH}): 
\[
a= {a_1 \choose D} + {a_2 \choose D-1} + \cdots + {a_D \choose 1}.
\]
A repeated application of Green's hyperplane restriction Theorem \cite{Green} yields
\[
c \leq {a_1 - (d-1) \choose D} + {a_2 - (d-1) \choose D-1} + \cdots + {a_D - (d-1) \choose 1}
\]
and 
\[
c' \leq {a_1 - d \choose D} + {a_2 -d \choose D-1} + \cdots + {a_D - d \choose 1}.
\]
Since the right-hand side of the inequality in Theorem \ref{THM bound regularity dim at least two} is increasing in $c$ and $c'$, one can replace their values with the above estimates, and still obtain an upper bound on $\reg(R)$. Alternatively, one can use the Macaulay expansion of $c$ to determine an upper bound for $c'$. If $\ds c= {c_1 \choose D} + {c_2 \choose D-1} + \cdots + {c_D \choose 1}$, then $\ds c' \leq {c_1-1 \choose D} + {c_2-1 \choose D-1} + \cdots + {c_D-1  \choose 1}$.
\end{remark}

Theorem \ref{THM bound regularity dim at least two} allows to drastically improve the upper bound $\reg(I) \leq (2D)^{2^{n-2}}$ which is usually found in the literature. In fact, it can be shown that $\reg(S/I) \leq D^{2^{n-2}}$ holds for all $n \geq 3$ as a consequence of Theorem \ref{THM bound regularity dim at least two}.

\begin{corollary} \label{Corollary bound ideals} Let $S=\kk[x_1,\ldots,x_n]$ with the standard grading, and $I$ be a homogeneous ideal, generated in degree at most $D$. Assume that $n \geq 3$. Then $\reg(I) \leq D^{2^{n-2}}$.
\end{corollary}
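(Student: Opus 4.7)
The plan is to derive the corollary from Theorem \ref{THM bound regularity dim at least two}, using Lemma \ref{Lemma bound Artinian reduction} to replace $\Phi(D, c', h)$ by $D^h$. In the main case $d = \dim(S/I) \geq 2$ this gives
\[
\reg(S/I) \leq \bigl((D+c-1)(D^h - \e(S/I) + 1)\bigr)^{2^{d-2}}.
\]
Since $h + d = n$ and hence $2^h \cdot 2^{d-2} = 2^{n-2}$, the task reduces to establishing the inequality $(D+c-1)(D^h - \e(S/I) + 1) \leq D^{2^h}$ for each $h \geq 1$, together with handling the low-dimensional cases $d \in \{0,1\}$.

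The crucial case is $h=1$: here $R/(y_1,\dots,y_{d-1})$ is a height-one quotient of the UFD $\kk[z_0,z_1]$, hence principal, say generated by a form $g$ of degree $\e(S/I) \leq D$. This forces $c = \HF(\kk[z_0,z_1]/(g); D) = \e(S/I)$, and the target inequality collapses to $D^2 - (\e-1)^2 \leq D^2$. For $h \geq 2$ I would use $c \leq \binom{D+h}{h} - h$, obtained by comparing with a complete intersection of $h$ forms of degree $D$ contained in the image of $I$ in $\kk[z_0,\dots,z_h] \cong S/(y_1,\dots,y_{d-1})$, and verify the target inequality by direct computation. The low-dimensional cases rely on standard containment arguments: if $d=0$ then $\reg(S/I) \leq n(D-1)$ from a complete intersection of $n$ forms of degree $D$, easily dominated by $D^{2^{n-2}}$ for $n \geq 3$; if $d=1$, one applies Theorem \ref{THM bound regularity dim at most one} together with a sharper bound on $c$ coming from the fact that $I$ has height $n-1$.

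The main obstacle is the equality case at $h=1$, $\e(S/I)=1$, where after exponentiation the bound becomes $\reg(S/I) \leq D^{2^{n-2}}$, giving $\reg(I) \leq D^{2^{n-2}}+1$ rather than the claimed $D^{2^{n-2}}$. To close this off-by-one gap, I would argue by induction on $D$: when $\e(S/I) = 1$ and $h = 1$, the ring $S/I$ has a single linear top-dimensional component, and the ideal admits a factorization $I = L \cdot Q$ for some linear form $L$ and an ideal $Q$ with generators of degree at most $D-1$. Then $\reg(I) = \reg(Q) + 1$, and the inductive hypothesis applied to $Q$ gives $\reg(I) \leq (D-1)^{2^{n-2}} + 1 \leq D^{2^{n-2}}$, where the last inequality follows from $D^{2^{n-2}} - (D-1)^{2^{n-2}} \geq 1$.
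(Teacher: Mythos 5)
There is a genuine gap, and it sits exactly where you put the most weight: the case $h=1$. A height-one ideal in the UFD $\kk[z_0,z_1]$ need \emph{not} be principal --- only its unmixed part is --- so the identity $c=\e(S/I)$ fails. Concretely, take $n=3$ and $I=x_1\cdot(x_1^{D-1},x_2^{D-1})$, which has height $1$ and is generated in degree $D$. After one general linear section the image of $I$ in $\kk[z_0,z_1]$ is $\ell_1(\ell_1^{D-1},\ell_2^{D-1})$ for independent linear forms $\ell_1,\ell_2$, whose degree-$D$ piece has dimension $2$; hence $c=D-1$ while $\e(S/I)=1$. Your target inequality then reads $(D+c-1)(D-\e+1)=(2D-2)D\leq D^2$, which is false for every $D\geq 3$, so Theorem \ref{THM bound regularity dim at least two} cannot be pushed through at $h=1$ this way. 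The paper sidesteps this entirely by doing at the outset, for \emph{all} of $h=1$, what you only do in the sub-case $\e=1$: write $I=fI'$ with $f=\gcd(I)$ of degree $D'\leq D$ and $\Ht(I')\geq 2$, note $\reg(I)=D'+\reg(I')$, and reduce to height at least two. Your factorization $I=L\cdot Q$ and the estimate $(D-1)^{2^{n-2}}+1\leq D^{2^{n-2}}$ are exactly this argument specialized to $\e=1$; you should simply run it for arbitrary $\e$ and drop the attempt to treat $h=1$ inside the theorem.

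Two further steps are asserted but do not survive the ``direct computation.'' For $h\geq 2$ the bound $c\leq\binom{D+h}{h}-h$ gives $D+c-1\leq D+\binom{D+h}{h}-h-1$, and for $h=2$, $D=2$ this equals $5>4=D^{2^h-h}$, so $(D+c-1)(D^h-\e+1)\leq D^{2^h}$ fails for small $D$; moreover you must eventually absorb the $+1$ from $\reg(I)=\reg(S/I)+1$, which you only address at $h=1$. The paper handles this by splitting off the case where $c$ attains its maximum (then $I$ is a complete intersection of degree-$D$ forms and $\reg(S/I)=h(D-1)$), and otherwise using the strict inequalities $c\leq\binom{D+h}{h}-h-1$ and $\Phi(D,c',h)\leq D^h-1$ together with $\e(S/I)\geq 1$ to get a \emph{strict} bound $\reg(S/I)<D^{2^{n-2}}$. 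Similarly, in the case $d=1$ the crude estimate $c\leq\binom{D+n-1}{n-1}-(n-1)$ fed into Theorem \ref{THM bound regularity dim at most one} gives $\reg(I)\leq D+c$, which already exceeds $D^{2^{n-2}}$ for $n=3$ and $D\in\{2,3,4\}$; the paper instead invokes the linear bound $\reg(S/I)\leq(D-1)n$ from Chardin--Fall--Nagel for $\dim(S/I)\leq 1$. Your $d=0$ case is fine.
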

\begin{proof}
Let $d=\dim(S/I)$ and $h=n-d = \Ht(I)$. We may assume that $D \geq 2$. If $h=1$, then we can write $I=fI'$ for some $f \in S$ of degree $D' \leq D$ and $I'$ an ideal of height at least two, generated in degree at most $D-D'$. Since $\reg(I) = D'+\reg(I')$, the claim is clear provided we can prove the corollary for ideals of height at least two.

After a change of coordinates, we may assume that $x_n,\ldots,x_{h+1}$ forms a linear system of parameters which form a filter regular sequence for $R=S/I$. Let $\ov{S} = \kk[x_1,\ldots,x_{h+1}]$, so that $R/(x_n,\ldots,x_{h+2})$ can be viewed as a quotient of $\ov{S}$ by a homogeneous ideal $\ov{I}$, still generated in degree at most $D$.

If $\dim(R) \leq 1$, then $\reg(R) \leq (D-1)n$ by \cite[Theorem 3.5]{CFN}. The latter is easily seen to be bounded above by $D^{2^{n-2}}-1$ whenever $n \geq 3$ and $D \geq 2$. Thus, $\reg(I) = \reg(R)+1 \leq D^{2^{n-2}}$.

For the rest of the proof, assume $\dim(R)\geq 2$. Observe that $c=\HF(\ov{S}/\ov{I};D) \leq \HF(\ov{S};D) - \Ht(\ov{I}) = {D+h \choose h} - h$ and, if we let $S'=\ov{S}/(x_{h+1})$ we similarly have that $c' \leq \HF(S';D) - \Ht(I S';D) = {D+h-1 \choose h-1}  - h$. 

Observe that, in the above inequalities, equality holds for $c$ if and only if it holds for $c'$, and this happens if and only if $I$ is generated by a regular sequence of forms of degree $D$. When this is the case we have $\reg(R) =  (D-1)h$, which can be bounded above by $D^{2^{h-1}}-1$ for all $h \geq 2$, as a straightforward calculations shows. Since $h \leq n-2$, the claim follows in this case as well.

We may henceforth assume that $c \leq {D+h \choose h} - h - 1$ and $c' \leq  {D+h-1 \choose h-1}  - h-1$. In this case, we have that $\Phi(D,c',h) = D^h-{D+h-1 \choose h-1} + c' + h \leq D^h-1$. By Theorem \ref{THM bound regularity dim at least two} we get that 
\begin{align*}
\reg(R) &\leq \left[(D+c-1)(\Phi(D,c',h)-\e(R)+1)\right]^{2^{n-h-2}} \\
& \leq \left[\left(D+{D+h \choose h} - h - 2\right)(D^h-\e(R))\right]^{2^{n-h-2}}. 
\end{align*}

A straightforward computation shows that for all $D \geq 2$ and $h \geq 2$ one has $D+{D+h \choose h} - h - 2  \leq D^{h}$. Since $2h \leq 2^h$ for all $h \geq 2$, and because $\e(R) \geq 1$, we finally get
\[
\reg(R) \leq \left[D^h(D^h-\e(R))\right]^{2^{n-h-2}} < D^{2h(2^{n-h-2})} \leq D^{2^{n-2}},
\]
so that $\reg(I) = \reg(R) + 1 \leq D^{2^{n-2}}$ in this case as well.
\end{proof}

We point out that the bound of Corollary \ref{Corollary bound ideals} also follows from \cite[Example 3.6]{CFN}. However, increasing either $D$ or the height $h$ of the ideal $I$, and conducting a careful analysis of the quantities involved in the right-hand side of the inequality in Theorem \ref{THM bound regularity dim at least two}, leads to even more refined estimates than the one above.

\bibliographystyle{alpha}
\bibliography{References}
\end{document}